\theoremstyle{plain}
\newtheorem{theorem}{Theorem}
\newtheorem{lemma}[theorem]{Lemma}
\date{\empty}
\begin{document}

\title{\Large\bf Borel Liftings of Graph Limits}
\author{
  \begin{minipage}{.65\textwidth}
    \begin{minipage}[c][1cm][t]{0.5\textwidth}
      \begin{center}
        \normalsize Peter Orbanz\\[2.5pt]
        \emph{Columbia University}
      \end{center}
    \end{minipage}\hfill
    \begin{minipage}[c][1cm][t]{0.5\textwidth}
      \begin{center}
        \normalsize Bal\'{a}zs Szegedy \\[2.5pt]
        \emph{Alfr\'{e}d R\'{e}nyi Institute\\ of Mathematics}
      \end{center}
    \end{minipage}\hfill
  \end{minipage}
}

\maketitle

\def\environ{U}
\def\corresp{\phi_{_{\square}}}
\def\weakinv{\corresp^{-1}}
\def\graphonspace{\mathbf{W}}
\def\quotientspace{\widehat{\graphonspace}}
\def\dOne{d_1}
\def\dCut{d_{\square}}
\def\deltaOne{\delta_1}
\def\deltacut{\delta_{_{\square}}}
\def\rearrangement{\psi}
\def\selector{\xi}
\def\eqvclass#1{[#1]_{_{\square}}}
\def\borel{\mathcal{B}}
\def\relation{\equiv_{_{\square}}}

\begin{abstract}
  The cut pseudo-metric on the space of graph limits
  induces an equivalence relation. The quotient space
  obtained by collapsing each equivalence class to a
  point is a metric space with appealing analytic properties.
  We show that the equivalence relation admits a Borel lifting: There
  exists a Borel-measurable mapping which maps each equivalence class to one
  of its elements.
\end{abstract}

\section*{\empty}

Let $(\Omega,\borel(\Omega),P)$ be an atomless Borel probability space
and $L_1(\Omega^2)$ the Banach space of integrable
functions on $\Omega\times\Omega$, equipped with the $L_1$-metric $\dOne$.
Let $\graphonspace\subset L_1(\Omega^2)$ be the subspace of symmetric integrable functions $\Omega^2\rightarrow[0,1]$.
Define a pseudo-norm on $\graphonspace$ by
\begin{equation}
  \|w\|_{_{\square}}
    =
    \sup_{S,T\in\borel(\Omega)}\int_{S\times T}w(s,t)dP(s)dP(t) \;.
\end{equation}
Following \citep{Borgs:Chayes:Lovasz:Sos:Vesztergombi:2008},
we use $\|\,.\,\|_{_{\square}}$ to define a pseudo-metric on $\graphonspace$ as
\begin{equation}
  \deltacut(w,w')
  :=
  \inf_{\psi}\|w^{\psi}-w'\|_{_{\square}} \qquad\text{where}\qquad
  w^{\psi}(x,y)=w(\psi(x),\psi(y))\;.
\end{equation}
The infimum is taken over all invertible measure-preserving transformations of $\Omega$, i.e.\ all
invertible measurable mappings ${\psi:\Omega\rightarrow\Omega}$ satisfying $\psi P=P$.
The pseudo-metric induces an equivalence relation on $\graphonspace$, given by 
${w\equiv w'\Leftrightarrow\deltacut(w,w')=0}$. The relation
${w\equiv w'}$ is also known as \emph{weak isomorphy} of $w$ and $w'$ \citep{Lovasz:2013}.
Denote the equivalence class of ${w\in\graphonspace}$
by $\eqvclass{w}$, and the quotient space of all equivalence classes by $\quotientspace$. 
On the quotient space, $\deltacut$ is a metric, and the metric space $(\graphonspace,\deltacut)$ is known
to be compact \citep[][]{Lovasz:Szegedy:2007}.
For each ${\widehat{w}\in\quotientspace}$, we write
${\eqvclass{\widehat{w}}\subset\graphonspace}$ for the corresponding
equivalence class of elements of $\graphonspace$.

Theorem \ref{theorem:result} below shows that weak isomorphy admits a Borel lifting,
i.e.\ there exists a 
Borel-measurable mapping 
${\xi:(\quotientspace,\deltacut)\rightarrow(\graphonspace,\dOne)}$ such that
\begin{align}
  \label{eq:result}
  {} &&
  \xi(\widehat{w})\in\eqvclass{\widehat{w}} && \text{ for all } \widehat{w}\in\quotientspace\;.
\end{align}
The lifting is not unique. More precisely:
\begin{theorem}
  \label{theorem:result}
  There is a sequence ${(\xi_n)}$ of measurable mappings
  ${\xi_n\!:(\quotientspace,\deltacut)\rightarrow(\graphonspace,\dOne)}$
such that, for every $\widehat{w}\in\quotientspace$, the set
  ${\lbrace \xi_n(\widehat{w})\,|\,n\in\mathbb{N}\rbrace}$ is a dense
  subset of $\eqvclass{\widehat{w}}$.
\end{theorem}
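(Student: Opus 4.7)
My plan is to cast Theorem~\ref{theorem:result} as a measurable-selection statement and apply the \emph{Castaing representation theorem}, which asserts that a multifunction with non-empty closed values in a Polish space that is Effros-measurable admits a sequence of Borel selectors whose values are dense in each fibre. Concretely, I will take the measurable space to be $(\quotientspace,\deltacut)$ with its Borel $\sigma$-algebra (well defined because $(\quotientspace,\deltacut)$ is a compact metric space), the target Polish space to be $(\graphonspace,\dOne)$, and the multifunction $F(\widehat w):=\eqvclass{\widehat w}$. A Castaing representation of $F$ is exactly the sequence $(\xi_n)$ asserted by the theorem.

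Two of the three hypotheses will follow quickly from the pointwise inequality $\|\cdot\|_{_{\square}}\leq\|\cdot\|_1$, which passes to $\deltacut(w,w')\leq\dOne(w,w')$ on $\graphonspace$. This shows that the quotient map $q:(\graphonspace,\dOne)\to(\quotientspace,\deltacut)$ is continuous, whence each fibre $\eqvclass{\widehat w}=q^{-1}(\widehat w)$ is $\dOne$-closed; non-emptiness is automatic, and $\graphonspace$ is Polish as a bounded, symmetric, $\dOne$-closed subset of the separable Banach space $L_1(\Omega^2)$.

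The principal obstacle will be Effros measurability of $F$: for every $\dOne$-open $U\subset\graphonspace$, the set $\{\widehat w:F(\widehat w)\cap U\neq\emptyset\}=q(U)$ must be Borel in $(\quotientspace,\deltacut)$. Continuity of $q$ by itself yields only that $q(U)$ is analytic, so an additional descriptive-set-theoretic input is needed. I intend to exploit the fact that $\relation$ is a \emph{smooth} Borel equivalence relation on the Polish space $(\graphonspace,\dOne)$, because its quotient map $q$ is a Borel map into a Polish space whose point-preimages are precisely the $\relation$-classes. Smoothness supplies a Borel transversal $B\subset\graphonspace$, and by Kuratowski's theorem on Borel bijections between standard Borel spaces, $q|_B:B\to\quotientspace$ is a Borel isomorphism. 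Carrying Borel structures back and forth between $B$ and $\quotientspace$ via $q|_B$, and using the $\dOne$-separability of $\graphonspace$ together with the explicit characterisation of $\relation$ in terms of measure-preserving transformations, one should be able to reduce Borelness of $q(U)$ to a tractable statement about the $\relation$-saturation of $U$ inside $\graphonspace$. Once that measurability step is in place, Castaing's theorem directly produces the required sequence $(\xi_n)$ and completes the proof.
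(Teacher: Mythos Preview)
Your proposal and the paper share the same scaffold: apply the Castaing/Kuratowski--Ryll-Nardzewski selection theorem to the multifunction $\widehat{w}\mapsto\eqvclass{\widehat{w}}$, after checking that the target is Polish, the values are non-empty and closed, and the multifunction is Effros measurable. Your argument for closedness of the classes---via continuity of the quotient map $q:(\graphonspace,\dOne)\to(\quotientspace,\deltacut)$---is in fact slicker than the paper's, which goes through homomorphism densities.

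The gap is in the Effros-measurability step. You correctly note that continuity of $q$ yields only that $q(U)$ is analytic, and you propose to exploit smoothness of $\relation$ via a Borel transversal $B$ with $q|_B$ a Borel isomorphism. But this does not advance the argument: $q(U)$ is Borel in $\quotientspace$ iff $(q|_B)^{-1}(q(U))=B\cap[U]_{\relation}$ is Borel in $\graphonspace$, and the saturation $[U]_{\relation}$ is still only analytic a priori---you have merely transported the problem back across the Borel isomorphism. Your closing sentence (``one should be able to reduce\ldots'') is a hope, not a proof; and the ``explicit characterisation'' of $\relation$ via invertible measure-preserving maps is subtler than it looks, since $\relation$ is \emph{not} the orbit relation of that group (the infimum defining $\deltacut$ need not be attained), so a countable-dense-subgroup trick does not apply directly.

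The paper supplies exactly the missing analytic input. It proves that for $\varepsilon<\delta$ one has $\overline{q(U_\varepsilon(v))}\subseteq q(U_\delta(v))$, using a lower-semicontinuity property of $\delta_1(\,\cdot\,,v)$ under $\deltacut$-convergence taken from Lov\'asz--Szegedy. Taking $\delta_i\uparrow\delta$ then exhibits $q(U_\delta(v))=\bigcup_i\overline{q(U_{\delta_i}(v))}$ as an $F_\sigma$ set, hence Borel; checking a base of open balls suffices. This graphon-specific lemma is precisely the content your outline leaves unfilled.
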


\newpage

{\noindent\bf Proof.} 
Theorem \ref{theorem:result} can be stated equivalently by
defining a set-valued mapping
\begin{equation}
  \label{eq:def:correspondence}
  \corresp:\quotientspace\rightarrow 2^{\graphonspace}
  \qquad
  \text{ with }
  \qquad
  \corresp(\widehat{w}):=\eqvclass{\widehat{w}}\;.
\end{equation}
We then have to show that there are measurable mappings $\xi_n$ with 
\begin{equation}
  \overline{\lbrace \xi_n(\widehat{w})\,\vert\,n\in\mathbb{N}\rbrace}
  =\corresp(\widehat{w}) \qquad\text{ for all }\widehat{w}\in\quotientspace\;,
\end{equation}
where $\overline{A}$ denotes the closure of $A$. 

Liftings of set-valued maps are a well-studied topic in analysis, and
we use a result of \citet{Kuratowski:Ryll-Nardzewski:1965} on the
existence of liftings, and a generalization by \citet{Castaing:1967}
(see e.g.\ \citep[][Theorem 12.16]{Kechris:1995} and \citep[][Theorem 14.4.1]{Klein:Thompson:1984}
for textbook statements).
For our purposes, these results can be summarized as follows:
\begin{theorem}
  \label{theorem:KRN}
  Let $\mathbf{X}$ be a measurable space, $\mathbf{Y}$ a Polish space,
  and 
  ${\phi:\mathbf{X}\rightarrow 2^{\mathbf{Y}}}$ a set-valued mapping.
  Require ${\phi(x)}$ to be non-empty and closed for all ${x\in\mathbf{X}}$, and that
  \begin{equation}
    \label{eq:def:weakinv}
    \phi^{-1}(A):=\lbrace x\in \mathbf{X}\,|\,\phi(x)\cap A\neq\emptyset\rbrace
  \end{equation}
  is a measurable set in $\mathbf{X}$ for each open set $A$ in $\mathbf{Y}$. 
  Then there exists a sequence of measurable mappings ${\selector_n:\mathbf{X}\rightarrow \mathbf{Y}}$
  such that ${\overline{\lbrace \selector_n(x)\,|\,n\in\mathbb{N}\rbrace}=\phi(x)}$
 for all ${x\in \mathbf{X}}$.
\end{theorem}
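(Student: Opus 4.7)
The plan is to prove the theorem in two stages: first build one measurable selector (the Kuratowski--Ryll-Nardzewski step), then apply that construction to a countable family of restricted multifunctions to produce selectors whose values are dense in each $\phi(x)$ (the Castaing step). Fix a complete metric $d$ compatible with the Polish topology on $\mathbf{Y}$, a countable dense sequence $(y_n)$ in $\mathbf{Y}$, and a countable base $\mathcal{U}$ of open sets.

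For the single selector, I will construct measurable ``step'' functions $\sigma_k\colon\mathbf{X}\to\mathbf{Y}$ with range contained in $\{y_n\}$, satisfying $d(\sigma_k(x),\phi(x))<2^{-k}$ and $d(\sigma_k(x),\sigma_{k+1}(x))<3\cdot 2^{-k-1}$ for every $x$. The base case $k=0$ partitions $\mathbf{X}$ by the measurable sets $\phi^{-1}(B(y_n,1))\setminus\bigcup_{m<n}\phi^{-1}(B(y_m,1))$ and sets $\sigma_0\equiv y_n$ on the $n$-th piece; these sets cover $\mathbf{X}$ because $\phi(x)$ is non-empty and $(y_n)$ is dense. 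In the induction step, on each level set $\{\sigma_k=y_n\}$ one has $\phi(x)\cap B(y_n,2^{-k})\neq\emptyset$, and density guarantees that the balls $B(y_m,2^{-k-1})$ with $d(y_m,y_n)<2^{-k}+2^{-k-1}$ cover $\phi(x)\cap B(y_n,2^{-k})$. Their $\phi^{-1}$-preimages are measurable by hypothesis, and a first-match enumeration over the qualifying indices $m$ defines $\sigma_{k+1}$ measurably with countable range. The sequence $(\sigma_k)$ is uniformly Cauchy, hence converges pointwise to a measurable function $\xi_\infty$; its limit lies in $\phi(x)$ because $\phi(x)$ is closed.

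For the Castaing step, for each $U\in\mathcal{U}$ set $X_U:=\phi^{-1}(U)$ (measurable) and define on $X_U$ the multifunction $\phi_U(x):=\overline{\phi(x)\cap U}$. It is non-empty and closed-valued, and since $\overline{A}\cap V\neq\emptyset\Leftrightarrow A\cap V\neq\emptyset$ for any open $V$, one has $\phi_U^{-1}(V)=X_U\cap\phi^{-1}(U\cap V)$, which is measurable. The single-selector result therefore yields a measurable $\widetilde\xi_U\colon X_U\to\mathbf{Y}$ with $\widetilde\xi_U(x)\in\overline{\phi(x)\cap U}\subseteq\phi(x)$. Extending by $\xi_\infty$ off $X_U$ gives a global measurable selector $\xi_U$ of $\phi$ with $\xi_U(x)\in\overline{U}$ whenever $x\in X_U$. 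Enumerating $\mathcal{U}$ turns $\{\xi_U\}_{U\in\mathcal{U}}$ into the desired sequence $(\xi_n)$: given $x\in\mathbf{X}$, $y\in\phi(x)$, and $\epsilon>0$, choose $U\in\mathcal{U}$ with $y\in U\subseteq B(y,\epsilon)$; then $x\in X_U$ and $\xi_U(x)\in\overline{U}\subseteq\overline{B(y,\epsilon)}$, so $\phi(x)\subseteq\overline{\{\xi_n(x)\}}$, while the reverse inclusion holds because each $\xi_n(x)\in\phi(x)$ and $\phi(x)$ is closed.

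The main obstacle is the induction step for the single selector: one must refine a countable measurable partition while simultaneously tightening the distance to $\phi(x)$ and controlling the jump from $\sigma_k$ to $\sigma_{k+1}$, all the while keeping the range inside the countable set $\{y_n\}$. This is exactly where the hypothesis that $\phi^{-1}(A)$ is measurable for \emph{every} open $A$ is used, applied to the shrinking balls around points of the dense sequence. Once a single measurable selector has been produced, the Castaing extension is mostly bookkeeping over $\mathcal{U}$, with the only subtlety being the verification that each restricted multifunction $\phi_U$ inherits the measurability hypothesis.
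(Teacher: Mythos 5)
Your proof is correct. Note, however, that the paper itself does not prove Theorem \ref{theorem:KRN} at all: it imports the statement as a known result, citing Kuratowski--Ryll-Nardzewski and Castaing, with Kechris (Theorem 12.16) and Klein--Thompson (Theorem 14.4.1) as textbook references. What you have written is essentially the standard proof found in those references: the successive-approximation construction of a single measurable selector (measurable countable-range functions $\sigma_k$ with $d(\sigma_k(x),\phi(x))<2^{-k}$ and controlled increments, converging uniformly to a selector by completeness and closedness of the values), followed by Castaing's trick of running that construction on the restricted multifunctions $x\mapsto\overline{\phi(x)\cap U}$ over a countable base to get a dense sequence of selectors. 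All the delicate points are handled: the covering argument that makes the first-match refinement well defined, the identity $\phi_U^{-1}(V)=X_U\cap\phi^{-1}(U\cap V)$ that transfers the measurability hypothesis to $\phi_U$, and the extension of each $\widetilde\xi_U$ off $X_U$ by a global selector so that every $\xi_n$ is everywhere a selector of $\phi$. So your write-up supplies a self-contained argument where the paper deliberately relies on a black-box citation; there is nothing to correct, only the observation that you have reproved the cited theorem rather than found a new route to it.
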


\noindent
For $\corresp$ as defined in \eqref{eq:def:correspondence}
and any subset
${A\subset\graphonspace}$, 
the set ${\weakinv(A)}$ 
in \eqref{eq:def:weakinv} simply consists of all ${\widehat{w}\in\quotientspace}$ 
for which $A$ contains at least one element of the equivalence class
$\eqvclass{\widehat{w}}$. If $A$ is in particular an
open $\dOne$-ball in $\graphonspace$, this set has the following
property:

\begin{lemma}
  \label{lemma:closure}
  Denote by $U_{\varepsilon}(v)$ the open $\dOne$-ball of radius $\varepsilon$ centered at $v\in\graphonspace$.
  If $\varepsilon<\delta$, 
  \begin{equation}
    \label{eq:lemma:closure}
    \overline{\weakinv(U_{\varepsilon}(v))}\;\subseteq\;\weakinv(U_{\delta}(v))
  \end{equation}
  for all $v\in\graphonspace$.
\end{lemma}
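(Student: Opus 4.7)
The plan is to start from $\widehat{w} \in \overline{\weakinv(U_{\varepsilon}(v))}^{\deltacut}$ and produce an explicit $w' \in \eqvclass{\widehat{w}}$ with $\dOne(w', v) < \delta$. First I would pick a sequence $\widehat{w}_n \to \widehat{w}$ in $\deltacut$ with representatives $u_n \in \eqvclass{\widehat{w}_n}$ satisfying $\dOne(u_n, v) < \varepsilon$. The central difficulty is that $\deltacut$ is strictly weaker than $\dOne$, so a subsequential $\dOne$-limit of $(u_n)$ need not exist, and even if it did, there is no reason for it to lie in $\eqvclass{\widehat{w}}$.

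The main device would be joint (pairwise) cut-distance compactness: the quotient of $\graphonspace \times \graphonspace$ under the diagonal action of invertible measure-preserving transformations is compact under the joint cut distance. This is a direct extension of the Lov\'asz--Szegedy compactness of $(\quotientspace, \deltacut)$ applied to $[0,1]^2$-valued ``decorated'' graphons. Applied to the sequence of pairs $(u_n, v)_{n\in\mathbb{N}}$ it produces a subsequence $(n_k)$ and invertible measure-preserving maps $\psi_k \colon \Omega \to \Omega$ such that $u_{n_k}^{\psi_k} \to u^*$ and $v^{\psi_k} \to v^*$ simultaneously in the cut norm $\|\cdot\|_{_{\square}}$. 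Because cut-norm convergence bounds $\deltacut$, both limits lie in the correct classes: $u^* \in \eqvclass{\widehat{w}}$ and $v^* \in \eqvclass{v}$.

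To transfer the $\dOne$-bound to the limit I would combine two standard facts about uniformly bounded sequences: cut-norm convergence implies weak $L_1$-convergence to the same limit (approximate any $L_\infty$-test function by rectangle simple functions in $L_1$), and the $L_1$-norm is weakly lower semicontinuous. Together with the measure-preserving invariance of $\dOne$ these give
\[
  \dOne(u^*,v^*)\;\leq\;\liminf_k\dOne\bigl(u_{n_k}^{\psi_k},v^{\psi_k}\bigr)\;=\;\liminf_k \dOne(u_{n_k},v)\;\leq\;\varepsilon.
\]

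Finally I would exchange $v^*$ for the original $v$. Since $(\Omega,P)$ is an atomless standard Borel probability space, the orbit $\{v^\rho : \rho \text{ an invertible m.p.\ transformation}\}$ is $\dOne$-dense in $\eqvclass{v}$: a classical structural fact for graphons on atomless spaces. Choosing $\rho$ with $\dOne(v^\rho,v^*) < \delta - \varepsilon$ and setting $w' := u^{*\rho^{-1}} \in \eqvclass{\widehat{w}}$, the measure-preserving invariance of $\dOne$ gives
\[
  \dOne(w',v)\;=\;\dOne(u^*,v^\rho)\;\leq\;\dOne(u^*,v^*)+\dOne(v^*,v^\rho)\;<\;\varepsilon+(\delta-\varepsilon)\;=\;\delta,
\]
so $\widehat{w} \in \weakinv(U_\delta(v))$ as required. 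The main obstacle I expect is justifying the $\dOne$-density of the orbit in $\eqvclass{v}$; everything else is standard compactness and weak-LSC, but this last step is precisely where the atomless Borel hypothesis on $(\Omega,P)$ enters essentially, and hence where the proof must be most careful.
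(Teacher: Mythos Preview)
Your argument is correct, but it takes a considerably longer route than the paper's. The paper disposes of the lemma in a few lines by invoking a single cited result, the lower semicontinuity of $\delta_1(\,\cdot\,,v)$ under $\deltacut$-convergence \cite[Lemma~2.11]{Lovasz:Szegedy:2010}: from $\widehat{w}_i\xrightarrow{\deltacut}\widehat{w}$ and $w_i\in U_\varepsilon(v)$ one gets immediately
\[
\delta_1(w,v)\;\le\;\liminf_i \delta_1(w_i,v)\;\le\;\varepsilon\;<\;\delta
\]
for any representative $w\in\eqvclass{\widehat{w}}$, and then picks an invertible $\psi$ with $d_1(w^\psi,v)<\delta$. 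Your three ingredients---pairwise cut-norm compactness for decorated graphons, weak lower semicontinuity of $\|\cdot\|_1$ under cut-norm convergence, and $d_1$-density of the orbit $\{v^\rho\}$ in $\eqvclass{v}$---together amount to a self-contained proof of essentially that same semicontinuity statement. Each ingredient is sound, and you have correctly located the one genuinely delicate step: the orbit-density claim (equivalently, $\deltacut(v,v^*)=0\Rightarrow\delta_1(v,v^*)=0$) is a nontrivial structural fact about graphons on atomless spaces, of comparable depth to the lemma the paper cites. So you are trading one black box for machinery of similar weight; the paper's version is shorter only because the needed semicontinuity has already been isolated in the literature.
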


\begin{proof}[Proof of Lemma \ref{lemma:closure}]
  Let ${(\widehat{w}_1,\widehat{w}_2,\ldots)}$ be a sequence in
  $\weakinv(U_{\varepsilon}(v))$ with 
  ${\widehat{w}_i\xrightarrow{\deltacut}\widehat{w}}$.
  We have to show that $\widehat{w}\in \weakinv(U_{\delta}(v))$.
  By definition of $\weakinv$, the sets 
  ${\corresp(\widehat{w}_i)\cap U_{\varepsilon}(v)}$ are non-empty. 
  Suppose $(w_i)$ is a sequence with
  ${w_i\in\corresp(\widehat{w}_i)\cap U_{\varepsilon}(v)}$ for each
  ${i\in\mathbb{N}}$. By 
  \citep[][Lemma 2.11]{Lovasz:Szegedy:2010}, convergence of
  $(\widehat{w}_i)$ to $\widehat{w}$ then implies
  \begin{equation}
    \label{eq:proof:closure:1}
    \varepsilon \geq \liminf\deltaOne(w_i,v) \geq \deltaOne(w,v) = \inf_{\rearrangement}\dOne(w^{\rearrangement},v)
  \end{equation}
  for any ${w\in\corresp(\widehat{w})}$.
  Since ${\varepsilon<\delta}$, there is hence a 
  measure-preserving transformation $\rearrangement$ such that
  ${\dOne(w^{\psi},v)<\delta}$, that is, ${w^{\psi}\in U_{\delta}(v)}$.
  Because $w^{\rearrangement}$ and $w$
  are weakly isomorphic, we also have 
  ${w^{\rearrangement}\in\corresp(\widehat{w})}$, and
  therefore
  \begin{equation}
    \widehat{w}
    \;\in\;
    \weakinv(\corresp(\widehat{w})\cap U_{\delta}(v))
    \;\subset\;
    \weakinv(U_{\delta}(v)) \;.
  \end{equation}
\end{proof}

\begin{proof}[Proof of Theorem \ref{theorem:result}]
  The space $(W,\dOne)$ is a closed subspace of the separable Banach space $L_1(\Omega^2)$ and hence Polish.
  The sets $\corresp(\widehat{w})$ are non-empty, by definition of the space $\quotientspace$ as a quotient.
  We will show that, additionally:
  \begin{enumerate}
    \renewcommand{\labelenumi}{\roman{enumi}.}
  \item The sets $\corresp(\widehat{w})$ are closed.
  \item For all open sets $A$ in $\graphonspace$, the set $\weakinv(A)$ is Borel in $\quotientspace$.
  \end{enumerate}
  The mapping $\corresp$ therefore satisfies the hypothesis of Theorem
  \ref{theorem:KRN}, and Theorem \ref{theorem:result} follows.\\
  
  {\noindent (i)}
  Denote by ${t_{F}\!:\graphonspace\rightarrow[0,1]}$ the homomorphism
  density indexed by a finite graph $F$ \citep[][]{Lovasz:Szegedy:2006}. 
  Two elements of $\graphonspace$ are
  weakly isomorphic if and only if their homomorphism densities coincide for all finite graphs $F$.
  Let ${\widehat{w}\in\quotientspace}$, and let ${(w_1,w_2,\ldots)}$
  be a sequence in the set ${\phi(\widehat{w})}$ with limit $w$ in $(\graphonspace,\dOne)$.
  The homomorphism densities are $\deltacut$-continuous and hence $\dOne$-continuous. Therefore,
  \begin{equation}
    \lim t_{F}(w_i)=t_F(w)\qquad\text{ for all } F\;,
  \end{equation}
  and since the $w_i$ are weakly isomorphic, ${t_F(w_i)=t_F(w)}$ for all $i$
  and all $F$. Thus, ${w\in\phi(\widehat{w})}$, and the set is closed.\\

  {\noindent (ii)}
  Let $U_{\delta}(v)$ denote the open ball of radius $\delta$ centered at $v\in\graphonspace$.
  Since $W$ is Polish, the open balls form a base of the topology and it is sufficient to consider sets of the form
  $A=U_{\delta}(v)$.
  Let $\delta_i\in\mathbb{R}_+$ be an increasing sequence $\delta_i\rightarrow\delta$. Then, by Lemma \ref{lemma:closure},
  \begin{equation*}
    \weakinv(U_{\delta}(v))
    \;=\;
    \bigcup_{i}\weakinv(U_{\delta_i}(v))
    \;\subseteq\;
    \bigcup_{i}\overline{\weakinv(U_{\delta_i}(v))}
    \;\stackrel{\eqref{eq:lemma:closure}}{\subseteq}\;
    \bigcup_{i}\weakinv(U_{\delta}(v))
    \;=\;
    \weakinv(U_{\delta}(v))\;.
  \end{equation*}
  In particular, $\weakinv(U_{\delta}(v))$ is a countable union of the closed sets
  $\overline{\weakinv(U_{\delta_i}(v))}$, and hence Borel.
\end{proof}

\end{document}